\newtheorem{corollary}{Corollary}[section]
\newtheorem{definition}{Definition}[section]
\newtheorem{lemma}{Lemma}[section]
\title{A Non-Abelian Approach to Riemann Surfaces\\ Part I: 
Wronskian Geometry}
\author{Mehrzad Ajoodanian}
\begin{document}
\maketitle
\begin{abstract}
We study projectively flat holomorphic vector bundles over Riemann surfaces. To each such bundle, we naturally assign a \emph{Wronskian line bundle}. The main idea is a notion of the division of two meromorphic sections. Abel's identity is interpreted as the first Chern class of the Wronskian line bundle.
\end{abstract}

\section{Introduction}

Let $X$ be a Riemann surface and let $\mathcal{L}$ be a holomorphic line bundle on $X$.  
If $A$ and $B \neq 0$ are two meromorphic sections of $\mathcal{L}$, their quotient
\[
\frac{A}{B}
\]
is a meromorphic function on $X$.  
This simple observation underlies some fundamental constructions in complex geometry: it allows one to describe explicitly the divisor class of $\mathcal{L}$ and to compute its first Chern class.  
Given a meromorphic section $A\neq 0$ of $\mathcal{L}$, 
we cover $X$ with the union of appropriate open sets $U_i$ such that $A$ has no zeros and no poles on the intersection $U_i\cap U_j$ for $i\neq j$. Let $A_i=A|_{U_i}$. Then the invertible holomorphic ratio $\frac{A_i}{A_j}$ defines a cocycle for $\mathcal{L}$ and the logarithmic differential $d\log \frac{A_i}{A_j}$ provides a 1-\v{C}ech cocycle with coefficients in the sheaf of differentials. This class is independent of the choice of the section $A$ and represents the first Chern class of $\mathcal{L}$.

\medskip
This is a well-known beautiful Abelian story. Our goal is to turn the story non-Abelian. To begin with, we pose the following questions: can we define an analogous quotient for meromorphic sections of a vector bundle?  
If such a quotient exists, what kind of geometric object does it produce?  
In this article, we propose coherent answers to these natural questions for \emph{projectively flat} holomorphic vector bundles on Riemann surfaces that we regard as a non-Abelian version of a line bundle.

\medskip

More precisely, let $\mathcal{V}$ be a projectively flat holomorphic vector bundle of rank $n$ over $X$.  
We define the quotient of a meromorphic section $A$ of $\mathcal{V}$ by a generic meromorphic section $B$ (to be defined precisely later) as a meromorphic section of the endomorphism bundle $\mathrm{End}(\mathcal{V})$.  
This trick helps extend the familiar notion of the ratio of two sections of a line bundle to the setting of higher rank bundles, where the quotient becomes an endomorphism (non-Abelian) rather than a scalar (Abelian).

\medskip

This non-Abelian quest leads us to a new geometric invariant:  
the \emph{Wronskian line bundle} $w(\mathcal{V})$ associated to a projectively flat vector bundle $\mathcal{V}$.  
The construction of $w(\mathcal{V})$ is based on the classical Wronskian determinant, familiar in the theory of ordinary differential equations. 
We then build an algebraic-geometric setting for the Wronskian matrix, not just the determinant as it is often considered, independent of coordinates and local charts. 
\medskip

Historically, the Wronskian determinant was introduced by Józef Hoene-Wroński in 1812 as a criterion for testing the linear independence of analytic functions.  
Later, its role became central in the works of Abel, Liouville, and Jacobi, who established deep relationships between the Wronskian, differential equations, and the geometry of function spaces.  
In this article, we reinterpret some of these classical results geometrically in terms of the Wronskian line bundle and its first Chern class.

\medskip

We then explore whether the Wronskian line bundle arises as the determinant of a natural rank-$n$ vector bundle $W(\mathcal{V})$, canonically associated to $\mathcal{V}$.  
We show that such a bundle can indeed be constructed, though not uniquely, and we investigate its dependence on the choice of a generic section.  

\medskip

Together with Amir Jafari, in a paper on Schwarzian geometry, we apply the framework discussed here to arrive at a new generalization of the Schwarzian derivative on Riemann surfaces. 

\medskip

We are now ready to embark on a non-Abelian journey to study Riemann surfaces carrying a projectively flat vector bundle. We shall rendezvous with Abel, Wro\'nski, Liouville and Chern, among others, along the way.

\section{The Wronskian}

The \emph{Wronskian} is a classical tool in the theory of differential equations. It provides a determinant-based criterion for determining whether a collection of analytic functions is linearly independent. In this section, we recall its definition and describe several of its key properties, presented in a fashion that suits our purpose.

\subsection{Definition and Basic Properties}

Let $U \subseteq \mathbb{C}$ be an open subset, and let $V$ be a complex vector space of dimension $n$. Given a meromorphic function $A: U \to V$ and a choice of basis for $V$, we can write
\[
A = (a_1, a_2, \dots, a_n),
\]
where each $a_i$ is a meromorphic function on $U$.

The \emph{Wronskian matrix} associated to $A$ (relative to the chosen basis) is the $n \times n$ matrix of meromorphic functions
\[
W(A) =
\begin{bmatrix}
a_1 & a_2 & \dots & a_n \\
a_1' & a_2' & \dots & a_n' \\
\vdots & \vdots & \ddots & \vdots \\
a_1^{(n-1)} & a_2^{(n-1)} & \dots & a_n^{(n-1)}
\end{bmatrix},
\]
where $a_i^{(j)}$ denotes the $j$-th derivative of $a_i$. The \emph{Wronskian determinant} of $A$ is then defined by
\[
w(A) = \det W(A).
\]

A change of basis in $V$ acts naturally on the Wronskian: if $T \in GL_n(\mathbb{C})$ represents a change of basis, then
\[
W(A) \mapsto W(A)T.
\]

\subsection{Change of Coordinates}

The Wronskian also transforms predictably under a holomorphic change of coordinates. If $\lambda: U_1 \to U$ is a holomorphic map, then by the chain rule,
\[
W(A \circ \lambda) = \Lambda_n(\lambda) \cdot (W(A) \circ \lambda),
\]
where $\Lambda_n(\lambda)$ is a lower triangular matrix whose diagonal entries are
\[
1, \lambda', (\lambda')^2, \dots, (\lambda')^{n-1},
\]
and whose subdiagonal entries involve higher derivatives of $\lambda$.

For example, for $n=3$ and $n=4$, we obtain
\[
\Lambda_3(\lambda)=
\begin{bmatrix}
1 & 0 & 0\\
0 & \lambda' & 0\\
0 & \lambda'' & (\lambda')^2
\end{bmatrix},
\qquad
\Lambda_4(\lambda)=
\begin{bmatrix}
1 & 0 & 0 & 0\\
0 & \lambda' & 0 & 0\\
0 & \lambda'' & (\lambda')^2 & 0\\
0 & \lambda''' & 3\lambda'\lambda'' & (\lambda')^3
\end{bmatrix}.
\]

In general, $\Lambda_n(\lambda)$ can be expressed explicitly using the classical Faà di Bruno formula (1855):
\[
(A \circ \lambda)^{(n)}
= \sum
\frac{n!}{m_1! \cdots m_n!}
A^{(m_1+\cdots+m_n)}
\prod_{j=1}^{n}
\left( \frac{\lambda^{(j)}}{j!} \right)^{m_j},
\]
where the sum is taken over all nonnegative tuples $(m_1, \dots, m_n)$ satisfying
$\sum_{j=1}^n j m_j = n$. This provides a constructive description of $\Lambda_n(\lambda)$, though we will not need the explicit form here.

\begin{definition}
A meromorphic map $B: U \to V$ is called \emph{generic} if $w(B) \not\equiv 0$, i.e., its Wronskian determinant is not identically zero.
\end{definition}

Equivalently, a meromorphic map $B$ is generic if and only if its components are linearly independent. This notion is independent of the chosen basis for $V$ and of the coordinate on $U$.

\subsection{The Wronskian Quotient}

\begin{definition}
Given two meromorphic maps $A, B: U \to V$, with $B$ generic, we define their \emph{Wronskian quotient} as the meromorphic map
\[
\frac{A}{B}: U \longrightarrow \mathrm{End}(V), \qquad
\frac{A}{B} = W(B)^{-1} W(A).
\]
\end{definition}

\begin{lemma}
The map $\frac{A}{B}$ is independent of the choice of basis for $V$ and of the coordinate on $U$.
\end{lemma}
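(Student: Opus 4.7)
The plan is to verify both invariances by direct substitution into the definition $\frac{A}{B} = W(B)^{-1} W(A)$, using the two transformation rules for the Wronskian matrix already recorded in the excerpt.

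For invariance under change of basis, I would first emphasize the correct interpretation: the quotient $\frac{A}{B}$ takes values in $\mathrm{End}(V)$, and a change of basis $T \in GL_n(\mathbb{C})$ acts on endomorphisms by conjugation $M \mapsto T^{-1} M T$. Since both $W(A)$ and $W(B)$ transform by right multiplication, $W(A) \mapsto W(A)T$ and $W(B) \mapsto W(B)T$, the quotient transforms as
\[
W(B)^{-1} W(A) \;\mapsto\; (W(B)T)^{-1}(W(A)T) \;=\; T^{-1}\bigl(W(B)^{-1}W(A)\bigr)T,
\]
which is precisely the conjugation rule on $\mathrm{End}(V)$. Hence $\frac{A}{B}$ is well defined as an endomorphism-valued map, independent of the matrix representative.

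For invariance under a holomorphic change of coordinate $\lambda: U_1 \to U$, I would apply the formula $W(A \circ \lambda) = \Lambda_n(\lambda) \cdot (W(A) \circ \lambda)$ to both $A$ and $B$. Because $B$ is generic, $W(B)\circ \lambda$ is invertible, and then $\Lambda_n(\lambda)$ cancels between numerator and denominator:
\[
W(B \circ \lambda)^{-1} W(A \circ \lambda)
\;=\; \bigl(W(B)\circ \lambda\bigr)^{-1}\Lambda_n(\lambda)^{-1}\Lambda_n(\lambda)\bigl(W(A)\circ \lambda\bigr)
\;=\; \bigl(W(B)^{-1}W(A)\bigr)\circ \lambda.
\]
Thus $\frac{A\circ\lambda}{B\circ\lambda} = \frac{A}{B}\circ \lambda$, which is the correct pullback behaviour of a map with values in $\mathrm{End}(V)$.

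The main obstacle here is conceptual rather than computational: one must recognize that invariance under change of basis does \emph{not} mean that the representing matrix is literally unchanged, but rather that it transforms by the conjugation action of $GL(V)$ on $\mathrm{End}(V)$. The essential input that makes this work is that the basis change acts by a single $T$ on the \emph{right} of $W(A)$ and $W(B)$ simultaneously, while the coordinate change acts by a single $\Lambda_n(\lambda)$ on the \emph{left}; the two-sided structure of $W(B)^{-1}W(A)$ is exactly what kills both factors in the two respective calculations.
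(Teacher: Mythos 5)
Your proposal is correct and follows essentially the same argument as the paper: the basis change $T$ acts on the right of both $W(A)$ and $W(B)$ and survives only as a conjugation on the quotient, while the coordinate-change factor $\Lambda_n(\lambda)$ acts on the left and cancels outright. The paper merely compresses both observations into a single displayed computation, whereas you treat them separately and make the conjugation-versus-cancellation point explicit.
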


\begin{proof}
If $T$ is the change of basis matrix and $\Lambda_n$ the corresponding coordinate change matrix, then
\[
\frac{A}{B} \mapsto T^{-1} W(B)^{-1} \Lambda_n^{-1} \Lambda_n W(A) T = T^{-1}\left(\frac{A}{B}\right)T,
\]
showing that $\frac{A}{B}$ transforms by conjugation and hence defines a well-defined endomorphism of $V$.
\end{proof}

\subsection{Algebraic Properties of the Quotient}

The quotient $\frac{A}{B}$ satisfies several natural algebraic identities.

\begin{lemma}
Let $A, B, C: U \to V$ be generic meromorphic maps. Then:
\[
\frac{A}{B} \cdot \frac{C}{A} \cdot \frac{B}{C} = 1.
\]
Furthermore, for any meromorphic function $f$ that is not identically zero,
\[
\frac{fA}{fB} = \frac{A}{B}.
\]
\end{lemma}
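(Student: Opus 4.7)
The plan is to prove both identities directly from the definition $\frac{A}{B} = W(B)^{-1}W(A)$, since the lemma from the previous subsection already guarantees that both sides of each equation are well-defined endomorphisms independent of basis and coordinate. The first identity is essentially algebraic bookkeeping, while the second requires an observation about how the Wronskian matrix transforms under multiplication by a scalar function.

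For the first identity, I will simply substitute the definition:
\[
\frac{A}{B} \cdot \frac{C}{A} \cdot \frac{B}{C}
= W(B)^{-1}W(A)\cdot W(A)^{-1}W(C)\cdot W(C)^{-1}W(B),
\]
and cancel consecutive pairs to obtain the identity matrix. The only thing to verify is that each of $W(A), W(B), W(C)$ is invertible as a matrix of meromorphic functions, which is exactly the genericity hypothesis on $A$, $B$, and $C$.

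The heart of the proof is the second identity. My approach is to establish a factorization $W(fA) = L(f)\, W(A)$, where $L(f)$ is a lower triangular $n\times n$ matrix depending only on $f$ and its derivatives, with $(L(f))_{k,j} = \binom{k}{j} f^{(k-j)}$ for $j \le k$. This follows entrywise from the Leibniz rule: the $(k,i)$ entry of $W(fA)$ is
\[
(f a_i)^{(k)} = \sum_{j=0}^{k} \binom{k}{j} f^{(k-j)} a_i^{(j)} = \sum_{j=0}^{k} (L(f))_{k,j}\, (W(A))_{j,i}.
\]
Crucially, $L(f)$ depends on $f$ but not on $A$, so the same matrix relates $W(fB)$ to $W(B)$. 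Then
\[
\frac{fA}{fB} = W(fB)^{-1} W(fA) = \bigl(L(f) W(B)\bigr)^{-1} L(f) W(A) = W(B)^{-1} W(A) = \frac{A}{B}.
\]

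The main conceptual obstacle is spotting that Leibniz's rule produces exactly a left multiplication by a matrix independent of $A$; once this is identified, the rest is formal. (The lower triangularity of $L(f)$ and the nonvanishing of its diagonal entries — all equal to $1$ — also confirm that $L(f)$ is invertible, so the step $(L(f) W(B))^{-1} = W(B)^{-1} L(f)^{-1}$ is legitimate.) I would present this factorization as a short standalone observation, since it is the same mechanism that earlier produced the matrix $\Lambda_n(\lambda)$ under a change of coordinates, and it will likely be useful elsewhere in the paper.
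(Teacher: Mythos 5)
Your proof is correct and follows essentially the same route as the paper: the first identity by direct substitution and cancellation, and the second via the Leibniz-rule factorization $W(fA) = L(f)\,W(A)$, where your $L(f)$ is exactly the paper's matrix $\Phi_n(f)$ up to an index shift. One small slip: the diagonal entries of $L(f)$ are $\binom{k}{k}f^{(0)} = f$, not $1$, but since $f \not\equiv 0$ the matrix is still invertible over the meromorphic functions and the argument is unaffected.
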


\begin{proof}
The first identity follows directly from the definition.  
For the second, observe that by the product rule,
\[
W(fA) = \Phi_n(f) \cdot W(A),
\]
where $\Phi_n(f)$ is a lower triangular matrix whose diagonal entries are $f$ and whose subdiagonal entries contain derivatives of $f$. For example,
\[
\Phi_3(f) =
\begin{bmatrix}
f & 0 & 0\\
f' & f & 0\\
f'' & 2f' & f
\end{bmatrix},
\qquad
\Phi_4(f) =
\begin{bmatrix}
f & 0 & 0 & 0\\
f' & f & 0 & 0\\
f'' & 2f' & f & 0\\
f''' & 3f'' & 3f' & f
\end{bmatrix}.
\]
In general, $(\Phi_n(f))_{ij} = 0$ for $i < j$ and
\[
(\Phi_n(f))_{ij} = \binom{i-1}{i-j} f^{(i-j)} \quad \text{for } i \ge j.
\]
Hence,
\[
W(fB)^{-1} W(fA)
= W(B)^{-1} \Phi_n(f)^{-1} \Phi_n(f) W(A)
= W(B)^{-1} W(A).
\]
\end{proof}

\subsection{The Rank of the Derivative of the Quotient as an Invariant}

The rank of the derivative is well defined and is independent of both the change of chart and frame.

\begin{lemma}
Let $A, B: U \to V$ be meromorphic maps, with $B$ generic.  
Then the derivative of
\[
\frac{A}{B}: U \longrightarrow \mathrm{End}(V)
\]
has rank at most one at every point.
\end{lemma}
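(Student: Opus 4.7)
The approach is to differentiate $\frac{A}{B} = W(B)^{-1}W(A)$ directly and show the result factors as a rank-one outer product. By the product rule and the formula for the derivative of an inverse,
$$\left(\frac{A}{B}\right)' \;=\; W(B)^{-1}\Bigl(W(A)' \;-\; W(B)'\, W(B)^{-1} W(A)\Bigr).$$

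The key structural observation is that differentiating a Wronskian matrix shifts its rows up by one, and only the bottom row is genuinely new information. Concretely, let $S$ be the $n\times n$ matrix with $S_{i,i+1}=1$ and zeros elsewhere, and let $e_n$ denote the last standard basis column vector. Then for any meromorphic $C:U\to V$,
$$W(C)' \;=\; S\, W(C) \;+\; e_n\, \gamma(C), \qquad \gamma(C) := \bigl(c_1^{(n)},\dots,c_n^{(n)}\bigr),$$
as one checks entry-by-entry from $(W(C))_{ij}=c_j^{(i-1)}$.

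I would then apply this identity to both $W(A)$ and $W(B)$ in the expression above. The $SW$-terms cancel because $SW(B)\cdot W(B)^{-1} W(A) = SW(A)$, and what survives is purely the last-row contribution:
$$\left(\frac{A}{B}\right)' \;=\; \bigl(W(B)^{-1}e_n\bigr)\cdot\Bigl(\gamma(A) \;-\; \gamma(B)\,\tfrac{A}{B}\Bigr),$$
which is an outer product of a column vector with a row vector. This makes the rank bound immediate at every point where the expression is defined.

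The only step that might be thought of as an obstacle is spotting the shift decomposition $W(C)' = SW(C) + e_n\,\gamma(C)$; once it is in hand, the cancellation and the rank-one factorization are forced. By the earlier lemma the rank of the derivative is invariant under change of basis and coordinate, so there is no need to redo the argument intrinsically.
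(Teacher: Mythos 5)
Your proof is correct, and the computation checks out: the decomposition $W(C)' = S\,W(C) + e_n\,\gamma(C)$ is verified entrywise, the $S$-terms cancel because $S\,W(B)\,W(B)^{-1}W(A) = S\,W(A)$, and the resulting expression $\bigl(W(B)^{-1}e_n\bigr)\bigl(\gamma(A) - \gamma(B)\tfrac{A}{B}\bigr)$ is manifestly an outer product, hence of rank at most one. The paper reaches the same conclusion from the same underlying fact (differentiating a Wronskian row yields the next row), but packages it differently: writing $W(A) = W(B)Q$ and differentiating row by row, it finds $B^{(i-1)}Q' = 0$ for $i = 1,\dots,n-1$, so the left kernel of $Q'$ contains the $(n-1)$-dimensional span of $B^{(0)},\dots,B^{(n-2)}$, forcing $\mathrm{rank}(Q') \le 1$. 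The two arguments are dual to one another --- indeed the first $n-1$ rows of $W(B)$ annihilate your column $W(B)^{-1}e_n$ from the left --- but yours buys strictly more: an explicit factorization identifying the image of $Q'$ as the line spanned by the last column of $W(B)^{-1}$ and the cokernel data in the row $\gamma(A) - \gamma(B)\tfrac{A}{B}$, at the modest cost of invoking the derivative-of-inverse formula. The paper's version is shorter and avoids matrix calculus. Your closing remark on invariance under change of chart and frame is also sound: the quotient transforms by conjugation by a constant matrix, and a coordinate change rescales the derivative by the scalar $\lambda'$, neither of which affects the rank.
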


\begin{proof}
Let $Q = W(B)^{-1} W(A)$, so that $W(A) = W(B) Q$.  
The $i$-th row of $W(A)$ is $A^{(i-1)}$, and similarly the $i$-th row of $W(B)$ is $B^{(i-1)}$.  
Thus, for each $i$,
\[
A^{(i-1)} = B^{(i-1)} Q.
\]
Differentiating and using $A^{(i)} = B^{(i)} Q$, we obtain
\[
B^{(i-1)} Q' = 0 \quad \text{for } i = 1, \dots, n-1.
\]
Since $B$ is generic, the vectors $B^{(0)}, \dots, B^{(n-2)}$ are linearly independent.  
Hence the nullity of $Q'$ is at least $(n-1)$, so $\mathrm{rank}(Q') \le 1$.
\end{proof}

\subsection{Ordinary Differential Equations}

The Wronskian appears naturally in the theory of linear homogeneous differential equations.  
Let $A: U \to V$ be a generic meromorphic map. Then the functions
\[
A, A', \dots, A^{(n-1)}
\]
are linearly independent, and thus there exist meromorphic functions $p_1, \dots, p_{n-1}$ such that $A$ satisfies the linear ODE of order $n$:
\[
A^{(n)} = p_1 A^{(n-1)} + \dots + p_{n-1} A.
\]

\begin{lemma}[Abel's identity]
The Wronskian $w(A) = \det W(A)$ satisfies the first-order differential equation
\[
w' = p_1 w.
\]
\end{lemma}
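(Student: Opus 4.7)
The plan is to prove this by directly differentiating the determinant $w = \det W(A)$ row by row, exploiting the structure that the rows of $W(A)$ are successive derivatives of $A$, and then invoking the ODE only at the very last step.

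Recall that the derivative of a determinant equals the sum, over rows, of the determinants obtained by differentiating a single row at a time. The $i$-th row of $W(A)$ is the tuple $A^{(i-1)}$ (in components, $(a_1^{(i-1)}, \dots, a_n^{(i-1)})$). Differentiating row $i$ replaces it by $A^{(i)}$. For $i = 1, \dots, n-1$, this newly differentiated row coincides with row $i+1$ of the original matrix, which is still present. Hence those $n-1$ matrices have two equal rows and contribute zero to $w'$. The only surviving term is the one where the last row has been differentiated, giving the matrix whose first $n-1$ rows are $A, A', \dots, A^{(n-2)}$ and whose last row is $A^{(n)}$.

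At this point I would substitute the ODE $A^{(n)} = p_1 A^{(n-1)} + p_2 A^{(n-2)} + \cdots + p_{n-1} A$ into the last row and use multilinearity of the determinant in the last row. This expresses $w'$ as the sum
\[
w' = \sum_{k=1}^{n-1} p_k \cdot \det\!\begin{bmatrix} A \\ A' \\ \vdots \\ A^{(n-2)} \\ A^{(n-k)} \end{bmatrix}.
\]
For every $k \ge 2$, the vector $A^{(n-k)}$ already appears among rows $1, \dots, n-1$, so the corresponding determinant vanishes. Only $k = 1$ survives, and there the matrix is precisely $W(A)$. This yields $w' = p_1 \, w$, as claimed.

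There is no genuine obstacle here; the proof is a bookkeeping argument once the two observations (differentiating successive rows produces repeated rows, and multilinearity allows substitution of the ODE) are in place. The only minor care needed is to keep the convention for the coefficients $p_k$ consistent with the ODE as stated, so that the coefficient of $A^{(n-1)}$ is named $p_1$ and therefore emerges as the coefficient of $w$ in Abel's identity.
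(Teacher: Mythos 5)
Your proof is correct and is essentially the argument the paper gestures at: the paper's one-line proof invokes the differentiation rule of determinants (which is your row-by-row expansion), and your use of multilinearity plus vanishing of determinants with repeated rows replaces its appeal to Cramer's rule without changing the substance. The only cosmetic point is the indexing of the coefficients $p_k$, which you inherit from the paper's own statement of the ODE and which is harmless since every term other than $p_1 A^{(n-1)}$ duplicates an existing row.
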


\begin{proof}
Abel's identity easily follows from Cramer's rule in linear algebra, together with the differentiation rule of determinants.
\end{proof}
There is an underlying geometric interpretation of Abel's formula that adds flavor to our discussion.
Abel shows that $p_1 = d(\log w)$, i.e., $p_1$ defines a meromorphic $1$-form on $U$.  
Later, we will interpret this $1$-form as the \emph{first Chern class} of the Wronskian line bundle.

\subsection{Liouville’s Formula, Abel’s Identity, and the First Chern Class}

A classical result of Liouville provides a non-Abelian generalization of Abel’s identity.  
If $\Phi: U \to GL_n(\mathbb{C})$ is a meromorphic map, then
\[
\mathrm{Tr}(\Phi^{-1} d\Phi) = d\log(\det \Phi).
\]
That is, the trace of the Maurer–Cartan form $\Phi^{-1} d\Phi$ equals the differential of the logarithm of the determinant of $\Phi$.  
Abel’s identity follows 
from Liouville's formula, and the following beautiful picture emerges. Liouville provides a \v{C}ech representative for the first Chern class of a holomorphic vector bundle in terms of the trace of Maurer–Cartan forms of the transition matrices. That in turn defines the first Chern class of the vector bundle, which equals the first Chern class of the associated determinant line bundle. 

\medskip
Next we upgrade from local charts and vector spaces to Riemann surfaces and projectively flat vector bundles.

\section{Projectively Flat Vector Bundles on Riemann Surfaces}

Let $X$ be a Riemann surface and let $\mathcal{V}$ be a holomorphic vector bundle over $X$.  
Our goal in this section is to extend the local computations carried out earlier to a \emph{global} setting by considering a generic section $A$ of $\mathcal{V}$.  
Before doing so, we recall the precise definition of a projectively flat bundle.

\begin{definition}
A holomorphic vector bundle $\mathcal{V}$ of rank $n$ over a Riemann surface $X$ is called \emph{projectively flat} (abbreviated \emph{pf}) if there exists an open cover $\{U_i\}$ of $X$ and trivializations of $\mathcal{V}$ over each $U_i$ such that the transition functions on overlaps $U_i \cap U_j$ are of the form
\[
\Phi_{ij} = \lambda_{ij} \cdot T_{ij},
\]
where $\lambda_{ij}: U_i \cap U_j \to \mathbb{C}^\times$ is a holomorphic function and $T_{ij} \in GL_n(\mathbb{C})$ is a constant matrix.
\end{definition}

Intuitively, this means that the bundle $\mathcal{V}$ becomes locally trivial once we ignore an overall scalar factor.  
The transition functions differ from being globally constant only by the multiplicative holomorphic scalars $\lambda_{ij}$, which capture the “projective” twisting of the bundle.  
Thus, a projectively flat bundle can be viewed as a vector bundle whose projectivization $\mathbb{P}(\mathcal{V})$ admits a \emph{flat connection}.

Every line bundle (rank one bundle) is automatically projectively flat, since its transition functions already take values in $\mathbb{C}^\times$.  
The notion becomes nontrivial for higher-rank bundles, where one seeks vector bundles whose curvature is proportional to the identity endomorphism.

\medskip

\noindent
The geometric meaning of projective flatness can be expressed in terms of connections.  
If $\nabla$ is a connection on $\mathcal{V}$ with curvature $F_\nabla$, then $\mathcal{V}$ is projectively flat if and only if
\[
F_\nabla = \omega \cdot \mathrm{Id}_{\mathcal{V}},
\]
for some scalar-valued $(1,1)$-form $\omega$ on $X$.  
This condition implies that the induced connection on the projectivization $\mathbb{P}(\mathcal{V})$ is flat.  
Equivalently, the holonomy representation of $\nabla$ factors through a representation
\[
\rho: \pi_1(X) \longrightarrow PGL_n(\mathbb{C}),
\]
rather than through $GL_n(\mathbb{C})$ itself.

\medskip

\noindent
Projectively flat bundles arise naturally in complex geometry and representation theory.  
A fundamental result of Narasimhan and Seshadri \cite{NarasimhanSeshadri}, reinterpreted by Donaldson \cite{DonaldsonNS}, establishes that on a compact Riemann surface, every stable holomorphic vector bundle is automatically projectively flat.   
This result provides a deep connection between differential geometry, gauge theory, and the moduli theory of stable vector bundles.

\medskip

\noindent
In what follows, we will make use of this structure to globalize the local Wronskian constructions discussed earlier.  
Given a projectively flat bundle $\mathcal{V}$, we will consider local sections $A_i$ on the open sets $U_i$ related by the projectively flat transition rule $\Phi_{ij} = \lambda_{ij} T_{ij}$, and we will study how their local Wronskians assemble into globally defined objects on $X$.

\section{How to Divide Sections of Projectively Flat Vector Bundles}

Let $X$ be a Riemann surface, and let $\mathcal{V}$ be a projectively flat (pf) vector bundle of rank $n$ over $X$.  
We now introduce a notion of \emph{division} between meromorphic sections of $\mathcal{V}$, generalizing the Wronskian quotient defined earlier in the local analytic setting.

\begin{definition}
Let $A$ and $B$ be two meromorphic sections of $\mathcal{V}$, with $B$ generic.  
We define their \emph{Wronskian quotient} as a meromorphic section of the endomorphism bundle $\mathrm{End}(\mathcal{V})$, locally given by
\[
\frac{A}{B} = W(B)^{-1} W(A): U \longrightarrow \mathrm{End}(\mathcal{V}),
\]
on a local chart $U$ where $\mathcal{V}$ admits a projectively flat trivialization.
\end{definition}

This definition generalizes the analytic notion introduced earlier, but now is interpreted in the geometric setting of vector bundles. The quotient $\frac{A}{B}$ measures, in a coordinate-free sense, how the derivatives of $A$ are linearly expressed in terms of those of $B$.

\begin{lemma}
The above definition is independent of the choice of local chart and projectively flat trivialization.  
Furthermore, for any three generic meromorphic sections $A, B,$ and $C$ of $\mathcal{V}$, the following algebraic properties hold:
\[
\frac{A}{B} \cdot \frac{C}{A} \cdot \frac{B}{C} = 1,
\]
and for any nonzero meromorphic function $f$ on $X$,
\[
\frac{fA}{fB} = \frac{A}{B}.
\]
\end{lemma}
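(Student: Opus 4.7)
The plan is to reduce everything to the local statements already proved. On an overlap of two projectively flat trivializations, the most general transition consists of a holomorphic change of local coordinate $\mu$ on $X$ together with a trivialization change $\Phi_{ij} = \lambda_{ij} T_{ij}$ on $\mathcal{V}$. By the three transformation rules recorded in the local setting, these act on $W(A)$ by left multiplication by $\Lambda_n(\mu)$ (coordinate change), left multiplication by $\Phi_n(\lambda_{ij})$ (scalar factor), and right multiplication by $T_{ij}$ (constant matrix factor), and identically on $W(B)$.

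Composing yields
\[
W(A)_j \;=\; \Lambda_n(\mu)\,\Phi_n(\lambda_{ij})\,W(A)_i\,T_{ij},
\qquad
W(B)_j \;=\; \Lambda_n(\mu)\,\Phi_n(\lambda_{ij})\,W(B)_i\,T_{ij},
\]
so in the quotient $W(B)^{-1}W(A)$ the left factors $\Lambda_n$ and $\Phi_n$ cancel, leaving
\[
\left(\tfrac{A}{B}\right)_j \;=\; T_{ij}^{-1}\left(\tfrac{A}{B}\right)_i T_{ij}.
\]
A section of $\mathrm{End}(\mathcal{V})$ transforms by conjugation by $\Phi_{ij} = \lambda_{ij}T_{ij}$, and the scalar $\lambda_{ij}$ drops out of the conjugation; so this is exactly the cocycle law for $\mathrm{End}(\mathcal{V})$. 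The local endomorphisms therefore glue into a globally defined meromorphic section, proving well-definedness.

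The two algebraic identities are now immediate. Working in any single projectively flat chart, the relation $\frac{A}{B}\cdot\frac{C}{A}\cdot\frac{B}{C}=1$ is the local identity already proved, and $\frac{fA}{fB}=\frac{A}{B}$ reduces chart-by-chart to the local version applied to the restriction $f|_{U_i}$. Since each identity holds in every projectively flat chart and both sides are now known to be global sections of $\mathrm{End}(\mathcal{V})$, they coincide globally.

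The only step that really requires attention, and the sole point where projective flatness (rather than honest flatness) is used, is the cancellation of the scalar factor $\lambda_{ij}$ via the formula $W(\lambda_{ij}\cdot)=\Phi_n(\lambda_{ij})W(\cdot)$. This is precisely the content of the local identity $\frac{\lambda A}{\lambda B}=\frac{A}{B}$, and it is what lets the Abelian part ($\lambda_{ij}$) of each transition be absorbed while the non-Abelian part ($T_{ij}$) conjugates the quotient. The rest is bookkeeping.
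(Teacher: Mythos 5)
Your proof is correct and follows essentially the same route as the paper, which only sketches the argument by noting that the constant matrices $T_{ij}$ and scalar factors $\lambda_{ij}$ act in a way that cancels in the quotient; you simply make explicit the three transformation rules ($\Lambda_n$, $\Phi_n$, right multiplication by $T_{ij}$) and verify the resulting conjugation law for $\mathrm{End}(\mathcal{V})$. No gaps; your version is just a fuller write-up of the paper's sketch.
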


\begin{proof}[Sketch of Proof]
The proof is local and follows directly from the corresponding results for meromorphic maps discussed earlier.  
Transition functions in a projectively flat trivialization act by constant matrices up to scalar factors, which cancel in the quotient, ensuring independence of the local frame.
\end{proof}

\medskip
\section{The Wronskian Line Bundle}

In this section, we associate to each projectively flat vector bundle $\mathcal{V}$ on a Riemann surface $X$ a naturally defined line bundle, denoted $w(\mathcal{V})$, which we call the \emph{Wronskian line bundle}.  
This line bundle encapsulates the global behavior of Wronskians of local sections of $\mathcal{V}$ and provides an intrinsic geometric invariant.

If $\mathcal{V}$ is a line bundle, then its Wronskian line bundle is simply itself, i.e.\ $w(\mathcal{V}) = \mathcal{V}$.  
For higher rank bundles, we proceed as follows.

Let $A$ be a generic meromorphic section of $\mathcal{V}$. Although $w(A) = \det W(A)$ depends on both the choice of coordinate on $X$ and the local projectively flat frame on $\mathcal{V}$, the \emph{divisor} of $w(A)$ is globally well defined.  
We denote this divisor by $\mathrm{div}(A)$.

\begin{lemma}
The class of $\mathrm{div}(A)$ in the Picard group $\mathrm{Pic}(X)$ is independent of the choice of generic section $A$.
\end{lemma}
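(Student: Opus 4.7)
The plan is to use the Wronskian quotient $\frac{A}{B}$ from the preceding section to exhibit an explicit meromorphic function on $X$ whose divisor realizes the difference $\mathrm{div}(A)-\mathrm{div}(B)$. Concretely, given two generic meromorphic sections $A,B$ of $\mathcal{V}$, the previous lemma guarantees that $\frac{A}{B} = W(B)^{-1}W(A)$ assembles into a globally defined meromorphic section of $\mathrm{End}(\mathcal{V})$. I would then take its determinant and show that this is a genuine global meromorphic function on $X$, whose local expression is exactly the ratio $w(A)/w(B)$.

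The main steps, in order, are the following. First, I would recall why $\det$ descends to a globally well-defined map from meromorphic sections of $\mathrm{End}(\mathcal{V})$ to meromorphic functions on $X$: the transition cocycle for $\mathrm{End}(\mathcal{V})$ acts by conjugation, and the determinant is conjugation-invariant. In the projectively flat case this is even more transparent, because the scalar factors $\lambda_{ij}$ drop out of the endomorphism transition rule. Second, I would evaluate this determinant in a projectively flat chart $U_i$ where $A$ and $B$ are represented by the column vectors $A_i, B_i$, obtaining
\[
\det\!\left(\frac{A}{B}\right)\Big|_{U_i}
=\det\!\bigl(W(B_i)^{-1}W(A_i)\bigr)
=\frac{w(A_i)}{w(B_i)}.
\]
Third, I would observe that by the definition of $\mathrm{div}(A)$ and $\mathrm{div}(B)$ (the divisors of $w(A_i)$ and $w(B_i)$, independent of the chart by the earlier discussion), the divisor of the global meromorphic function $f:=\det(A/B)$ satisfies
\[
\mathrm{div}(f)=\mathrm{div}(A)-\mathrm{div}(B).
\]
Since any principal divisor is trivial in $\mathrm{Pic}(X)$, this gives the equality of classes $[\mathrm{div}(A)]=[\mathrm{div}(B)]$.

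The main obstacle is step one: verifying rigorously that $\det(A/B)$ really is a \emph{global} meromorphic function, not merely a collection of local ratios. This requires checking that on overlaps $U_i\cap U_j$, the two local representatives of $A/B$ are related by conjugation (so that determinants match). This has essentially been done in the independence lemma of the previous section, where it was shown that under a change of chart and projectively flat frame the quotient transforms as $\frac{A}{B}\mapsto T^{-1}\,\frac{A}{B}\,T$ (with the scalar factors $\lambda_{ij}$ and the Faà di Bruno matrices $\Lambda_n, \Phi_n$ cancelling on both sides); invoking this lemma closes the argument. No genericity hypothesis is needed beyond the standing assumption that both $A$ and $B$ are generic, which is exactly what ensures $w(A)$ and $w(B)$ are not identically zero and hence that $f$ is a well-defined nonzero meromorphic function.
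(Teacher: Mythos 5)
Your argument is correct and is essentially the paper's own proof: both identify $\det\!\left(\frac{A}{B}\right)=\frac{w(A)}{w(B)}$ as a globally defined meromorphic function (global because the quotient transforms by conjugation and $\det$ is conjugation-invariant), so that $\mathrm{div}(A)-\mathrm{div}(B)$ is principal. Your version merely spells out the chart-by-chart verification that the paper leaves implicit.
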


\begin{proof}
For any two generic sections $A$ and $B$, we have
\[
\mathrm{div}(A) - \mathrm{div}(B) = \mathrm{div}\left( \frac{w(A)}{w(B)} \right).
\]
Since $\frac{w(A)}{w(B)} = \det\left( \frac{A}{B} \right)$ is a globally defined meromorphic function, its divisor is principal, and the two divisors define the same class in $\mathrm{Pic}(X)$.
\end{proof}

\begin{definition}
For a projectively flat vector bundle $\mathcal{V}$ on a Riemann surface $X$, the \emph{Wronskian line bundle} $w(\mathcal{V})$ is defined as the line bundle corresponding to the divisor class $\mathrm{div}(A)$ for any generic meromorphic section $A$ of $\mathcal{V}$.
\end{definition}

\subsection{The First Chern Class of the Wronskian Line Bundle}

Recall that for a holomorphic line bundle $L$ with transition cocycle $\lambda_{ij}$ on an open cover $\mathcal{U} = \{U_i\}$, its first Chern class $c_1(L)$ can be represented in \v{C}ech cohomology $\check{H}^1(\mathcal{U}, \Omega^1)$ by the cocycle $\{d\log \lambda_{ij}\}$.

Let $\mathcal{V}$ be a projectively flat bundle on $X$, and let $w(\mathcal{V})$ be its Wronskian line bundle, constructed from a generic section $A$.  
Choose a projectively flat trivialization $\mathcal{U} = \{U_i\}$ such that $w(A)$ has no zeros or poles on overlaps $U_i \cap U_j$.

Then the transition functions of $w(\mathcal{V})$ are given by
\[
\lambda_{ij} = \frac{w(A_i)}{w(A_j)},
\]
where $A_i = A|_{U_i}$.  
On each $U_i$, the section $A_i$ satisfies a local linear differential equation
\[
A_i^{(n)} = p_1^i A_i^{(n-1)} + \cdots + p_{n-1}^i A_i,
\]
for some meromorphic functions $p_k^i$.

By Abel’s identity, we have on overlaps:
\[
d\log \lambda_{ij} = p_1^i - p_1^j.
\]
The collection $\{p_1^i\}$ therefore defines a \v{C}ech 1-cocycle with values in $\Omega^1$, providing a representative for the first Chern class of the Wronskian line bundle:
\[
c_1(w(\mathcal{V})) = [\{p_1^i - p_1^j\}] \in \check{H}^1(\mathcal{U}, \Omega^1).
\]
This provides a geometric interpretation of Abel's identity as the curvature (first Chern class) of the Wronskian line bundle. For a simple algebraic interpretation of Abel's identity, via a duality, see \cite{AjoodanianAbel}.

\medskip

\section{The Wronskian Vector Bundles}

A natural question arises: can the Wronskian line bundle $w(\mathcal{V})$ of a projectively flat bundle $\mathcal{V}$ be realized as the determinant of a higher-rank bundle, a Wronskian vector bundle, naturally associated to $\mathcal{V}$? 
The answer is affirmative, though not unique. In fact, we expect that different choices involved in constructing various 
Wronskian vector bundles fit together to form a \emph{fibration} over the moduli space of pf bundles. But that is a story for another day. We show the construction of a Wronskian vector bundle below.

\medskip

\noindent
Let $\mathcal{V}$ be a pf bundle of rank $n$, and $A$ a generic section of $\mathcal{V}$.  
We construct a new vector bundle $W_A(\mathcal{V})$ of rank $n$ whose determinant line bundle is $w(\mathcal{V})$.  
Different choices of $A$ lead to bundles differing by a twist associated with a meromorphic $n$-th power.

\medskip

Let $\mathcal{U} = \{U_i\}$ be a cover of $X$ trivializing $\mathcal{V}$ in a projectively flat way and such that $w(A)$ has no zeros or poles on $U_i \cap U_j$.  
Let $A_i = A|_{U_i}$, and define the cocycle
\[
\Phi_{ij}^A: U_i \cap U_j \longrightarrow GL_n(\mathbb{C}), \qquad
\Phi_{ij}^A = W(A_j)^{-1} W(A_i).
\]
This satisfies the cocycle condition, hence defines a holomorphic vector bundle $W_A(\mathcal{V})$.  
While $W_A(\mathcal{V})$ is independent of the chosen cover, it may depend on the choice of section $A$.

\begin{lemma}
Let $A$ and $B$ be two generic sections of a projectively flat bundle $\mathcal{V}$ of rank $n$ such that
\[
\mathrm{div}(A) - \mathrm{div}(B) = \mathrm{div}(f^n)
\]
for some meromorphic function $f$ on $X$.  
Then there exists a natural isomorphism
\[
W_A(\mathcal{V}) \simeq W_B(\mathcal{V}).
\]
\end{lemma}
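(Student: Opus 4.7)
My plan is to use the invariance of the Wronskian bundle under rescaling by global meromorphic functions to reduce to $\mathrm{div}(A)=\mathrm{div}(B)$, and then to realize the Wronskian quotient $A/B$ as the isomorphism.

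The first step is the reduction. For any globally defined meromorphic function $h$ on $X$ and any generic section $C$ of $\mathcal{V}$, I claim that $W_{hC}(\mathcal{V})=W_C(\mathcal{V})$, i.e.\ the two bundles share the same transition cocycle. This follows by plugging $hC$ into the cocycle formula and applying $W(hC)=\Phi_n(h)W(C)$: the two factors of $\Phi_n(h)$ that appear cancel, because $h$ is globally defined and hence $\Phi_n(h_i)=\Phi_n(h_j)$ on every overlap. Taking $h:=f$ and writing $A=f\cdot(A/f)$, I obtain $W_A(\mathcal{V})=W_{A/f}(\mathcal{V})$. On the other hand, $w(A/f)=\det\Phi_n(1/f)\cdot w(A)=w(A)/f^n$ gives $\mathrm{div}(A/f)=\mathrm{div}(A)-n\,\mathrm{div}(f)=\mathrm{div}(B)$. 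Replacing $A$ with $A/f$, I may therefore assume $\mathrm{div}(A)=\mathrm{div}(B)$ from the outset.

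The second step is to identify the isomorphism. Set $g:=A/B=W(B)^{-1}W(A)$, a globally defined meromorphic section of $\mathrm{End}(\mathcal{V})$. Using $W(A)=W(B)g$ and the definitions of the two cocycles, a direct computation gives
\[
g_j\,\Phi_{ij}^A=W(B_j)^{-1}W(A_j)\cdot W(A_j)^{-1}W(A_i)=W(B_j)^{-1}W(A_i)=\Phi_{ij}^B\,g_i
\]
on every overlap $U_i\cap U_j$, so $g$ represents a meromorphic bundle morphism $W_A(\mathcal{V})\to W_B(\mathcal{V})$. Moreover, $\det g=w(A)/w(B)$ has divisor $\mathrm{div}(A)-\mathrm{div}(B)=0$ and is thus a nowhere-vanishing holomorphic function, which shows $g$ is invertible wherever it is holomorphic.

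The main obstacle will be upgrading $g$ from a meromorphic to an everywhere holomorphic section. The issue is localised at the isolated points $p\in X$ where both $w(A)$ and $w(B)$ vanish, necessarily to the same order by the matching of divisors. There the adjugate expansion $W(B)^{-1}=\mathrm{adj}(W(B))/w(B)$ displays an apparent pole, and one must show that $\mathrm{adj}(W(B))\cdot W(A)$ vanishes to the same order at $p$ to cancel it. The expected mechanism is that the Wronskian structure (each row of $W(A)$ or $W(B)$ is a derivative of the preceding one) forces the rank-drops of $W(A)$ and $W(B)$ at $p$ to occur along the same filtration, essentially encoded by the local ODE that $A$ and $B$ satisfy, and the orders of vanishing of $w(A)$ and $w(B)$ agree by hypothesis. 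A careful local Taylor analysis near each such $p$ should then deliver the required cancellation. Once holomorphicity is in place, $g$ is a holomorphic isomorphism $W_A(\mathcal{V})\simeq W_B(\mathcal{V})$, manifestly natural in $A$ and $B$.
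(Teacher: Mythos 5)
Your reduction to the case $\mathrm{div}(A)=\mathrm{div}(B)$ via $W_{fC}(\mathcal{V})=W_C(\mathcal{V})$, and your intertwining identity $g_j\,\Phi_{ij}^A=\Phi_{ij}^B\,g_i$ for $g=W(B)^{-1}W(A)$, are both correct, and together they amount to the same computation the paper performs in one line by factoring
\[
W(A_j)^{-1}W(A_i)=W(A_j)^{-1}W(fB_j)\cdot W(fB_j)^{-1}W(fB_i)\cdot W(fB_i)^{-1}W(A_i),
\]
which exhibits the two cocycles as conjugate by the cochain $\left(\frac{A}{fB}\right)_i$. Up to that point you are on the paper's track.

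The genuine gap is exactly the one you flag in your final paragraph, and the mechanism you propose for closing it does not work. For $g$ to define a bundle isomorphism it must be holomorphic and invertible on each chart, in particular at the isolated points where $w(A)$ and $w(B)$ both vanish; equality of the orders of vanishing of the two Wronskian determinants does \emph{not} force this. Take $n=2$, $A=(z,z^2)$, $B=(1,z^3)$ near $z=0$: both are generic, $w(A)=z^2$ and $w(B)=3z^2$ vanish to the same order, yet
\[
W(B)^{-1}W(A)=\begin{pmatrix} \tfrac{2z}{3} & \tfrac{z^2}{3} \\ \tfrac{1}{3z^{2}} & \tfrac{2}{3z} \end{pmatrix}
\]
has a pole at $0$ even though its determinant is the nonvanishing constant $\tfrac13$. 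So the claim that the rank drops of $W(A)$ and $W(B)$ must occur "along the same filtration" is false in general, and no Taylor expansion will produce the cancellation; one needs either an additional hypothesis ruling out such local behavior or a different choice of conjugating cochain near the bad points. You should be aware that the paper's own proof asserts the corresponding holomorphy of $\frac{A}{fB}$ on each $U_i$ with no more justification than you give, so you have correctly located where the real content of the lemma lies — but neither your sketch nor the heuristic you offer supplies it, and as stated the heuristic is refuted by the example above.
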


\begin{proof}
Choose a cover $\mathcal{U} = \{U_i\}$ trivializing $\mathcal{V}$ in a projectively flat way such that both $A$ and $B$ have no zeros or poles on $U_i \cap U_j$.  
The cocycles defining $W_A(\mathcal{V})$ and $W_B(\mathcal{V})$ are given by $\Phi_{ij}^A$ and $\Phi_{ij}^B$, respectively.  
We compute:
\[
W(A_j)^{-1} W(A_i)
= W(A_j)^{-1} W(fB_j) \, W(fB_j)^{-1} W(fB_i) \, W(fB_i)^{-1} W(A_i).
\]
This yields
\[
\Phi_{ij}^A = \left( \frac{fB}{A} \right)_j \Phi_{ij}^B \left( \frac{A}{fB} \right)_i.
\]
Since $\mathrm{div}(A) - \mathrm{div}(B) = \mathrm{div}(f^n)$, the quotient $\frac{A}{fB}$ is holomorphic on each $U_i$, and therefore the two cocycles differ by a coboundary.  
This defines a canonical isomorphism between $W_A(\mathcal{V})$ and $W_B(\mathcal{V})$.
\end{proof}

\medskip

\section{Examples}

We now compute the Wronskian line bundle in several basic examples.

\begin{lemma}
If $\mathcal{V}$ is the trivial bundle of rank $n$ on a Riemann surface $X$, then
\[
w(\mathcal{V}) = \frac{n(n-1)}{2} K,
\]
where $K$ denotes the canonical line bundle of $X$.
\end{lemma}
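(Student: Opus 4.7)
The plan is as follows. Since $\mathcal{V}$ is trivial, any global meromorphic section is simply an $n$-tuple $A=(a_1,\ldots,a_n)$ of meromorphic functions on $X$, and generic ones exist: for instance $(1,f,f^2,\ldots,f^{n-1})$ for any non-constant meromorphic $f$ has a nonvanishing Wronskian. I would compute $w(\mathcal{V})$ directly by exhibiting $w(A)$ as a global meromorphic section of $K^{\otimes n(n-1)/2}$; by the definition of the Wronskian line bundle this identifies $w(\mathcal{V})$ with $\frac{n(n-1)}{2}\,K$ in $\mathrm{Pic}(X)$.

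The substantive step is a change-of-coordinate check on overlaps. Fix a coordinate atlas $\{(U_i,z_i)\}$ of $X$. Because $\mathcal{V}$ carries the identity trivialization (projectively flat with $\lambda_{ij}\equiv 1$ and $T_{ij}=I$), on an overlap $U_i\cap U_j$ with coordinate transition $z_j=\phi(z_i)$ the two local representatives of $A$ are related by $A|_{U_i}=A|_{U_j}\circ\phi$. Applying the formula of Section~2.2 and taking determinants, and using that $\Lambda_n(\phi)$ is lower triangular with diagonal entries $1,\phi',(\phi')^2,\ldots,(\phi')^{n-1}$, one obtains
\[
w(A|_{U_i})=(\phi')^{\,1+2+\cdots+(n-1)}\cdot\bigl(w(A|_{U_j})\circ\phi\bigr)=(\phi')^{\,n(n-1)/2}\cdot\bigl(w(A|_{U_j})\circ\phi\bigr).
\]
Since $K$ has transition cocycle $\phi'=dz_j/dz_i$ in this atlas, the displayed rule is precisely the transition rule for a meromorphic section of $K^{\otimes n(n-1)/2}$, so $\{w(A|_{U_i})\}$ defines such a section globally. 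Its divisor is $\mathrm{div}(A)$, whose class is $w(\mathcal{V})$.

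I do not expect any serious obstacle: the trivial bundle is the case in which the machinery simplifies the most, since the pf transition functions $\Phi_{ij}=\lambda_{ij}T_{ij}$ contribute nothing and all the twisting visible in $w(A)$ arises purely from the chain-rule matrix $\Lambda_n$. The arithmetic $\det\Lambda_n(\phi)=(\phi')^{n(n-1)/2}$ is immediate from the lower-triangular form, and the only care required is in aligning the direction of the coordinate change with the convention for the transition functions of $K$, which is a routine one-line check. As a sanity check, $n=1$ gives $w(\mathcal{V})=0\cdot K=\mathcal{O}_X$, matching the stated convention $w(\mathcal{V})=\mathcal{V}$ for line bundles.
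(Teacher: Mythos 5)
Your proof is correct, but it takes a different route from the paper's. The paper picks the explicit generic section $A=(1,f,f^2,\dots,f^{n-1})$ and invokes the classical Vandermonde-type identity $w(A)=1!\,2!\cdots(n-1)!\,(f')^{n(n-1)/2}$, so that $w(A)$ is visibly the $\tfrac{n(n-1)}{2}$-th power of the global section $df$ of $K$ and the divisor class is read off immediately. You instead work with an arbitrary generic section of the trivial bundle and extract the answer structurally: since the trivialization cocycle is the identity, the only twisting in $\{w(A|_{U_i})\}$ comes from the chain-rule matrix $\Lambda_n(\phi)$, whose determinant $(\phi')^{n(n-1)/2}$ is exactly the transition cocycle of $K^{\otimes n(n-1)/2}$, so the local Wronskians glue to a global meromorphic section of that bundle. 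Your argument is slightly longer but more illuminating: it explains \emph{why} the answer is $\tfrac{n(n-1)}{2}K$ (the exponent is $\det\Lambda_n$) rather than verifying it on one convenient section, and it makes no use of the monomial Wronskian formula except to certify that a generic section exists. The paper's computation is shorter and self-contained given the classical formula. Both are complete proofs; your sanity check at $n=1$ and your care about the orientation of the $K$-cocycle are appropriate and correct.
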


\begin{proof}
Let $f$ be a nonconstant meromorphic function on $X$.  
Then the section
\[
A = (1, f, f^2, \dots, f^{n-1})
\]
is generic.  
By the classical formula for the Wronskian of monomials, we have
\[
w(A) = 1! \, 2! \, \dots (n-1)! \, (f')^{n(n-1)/2}.
\]
Since $f'$ is a local section of the canonical bundle $K$, the result follows.
\end{proof}

\begin{lemma}
Let $\mathcal{V}$ be a projectively flat bundle of rank $n$ on a Riemann surface $X$, and let $L$ be a line bundle on $X$.  
Then
\[
w(\mathcal{V} \otimes L) = w(\mathcal{V}) \otimes L^n.
\]
\end{lemma}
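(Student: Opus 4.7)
The plan is to reduce the statement to the scaling formula $w(fA) = f^{n} w(A)$ established earlier for meromorphic maps into a fixed vector space, applied to a suitable generic section of $\mathcal{V} \otimes L$ obtained by tensoring a generic section of $\mathcal{V}$ with a nonzero meromorphic section of $L$.

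First I would check that $\mathcal{V} \otimes L$ is itself projectively flat, so that the definition of $w(\mathcal{V} \otimes L)$ even makes sense. Indeed, if $\Phi_{ij}^{\mathcal{V}} = \lambda_{ij} T_{ij}$ is a pf cocycle for $\mathcal{V}$ and $\mu_{ij}$ is the transition cocycle for $L$, then $\mathcal{V} \otimes L$ has transition functions $(\lambda_{ij}\mu_{ij}) T_{ij}$, which is again of the required pf form. After refining, I would fix a common open cover $\{U_i\}$ that trivializes $\mathcal{V}$ in a projectively flat fashion and simultaneously trivializes $L$.

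Next, choose a generic meromorphic section $A$ of $\mathcal{V}$ and a nonzero meromorphic section $s$ of $L$, with local expressions $A_i$ and $s_i$ on $U_i$. Their tensor product $A \otimes s$ is a meromorphic section of $\mathcal{V} \otimes L$ whose local expression is simply $(A \otimes s)_i = s_i A_i$. Applying the identity $W(fA) = \Phi_n(f)\,W(A)$ with $f = s_i$ and taking determinants gives
\[
w\bigl((A\otimes s)_i\bigr) = s_i^{\,n}\, w(A_i),
\]
which is not identically zero; hence $A \otimes s$ is a legitimate generic section, and may be used to compute $w(\mathcal{V} \otimes L)$.

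Finally I would pass to divisor classes in $\mathrm{Pic}(X)$. The local functions $s_i^{\,n}$ are transition-compatible with $L^n$, so their collective divisor equals $n \cdot \mathrm{div}(s)$, representing $L^n$; the collective divisor of $w(A_i)$ is $\mathrm{div}(A)$, representing $w(\mathcal{V})$. Summing gives $\mathrm{div}(A \otimes s) = n \cdot \mathrm{div}(s) + \mathrm{div}(A)$, and hence $w(\mathcal{V} \otimes L) = w(\mathcal{V}) \otimes L^n$. The one subtle point to be careful about is that the factor $\Phi_n(s_i)$ involves derivatives of $s_i$ and therefore depends on the coordinate and on the pf frame; the crucial fact that makes the argument go through is that its determinant $s_i^{\,n}$ is derivative-free and transforms precisely as a section of $L^n$, which is what cleanly splits the $L$-contribution from the $\mathcal{V}$-contribution in the Wronskian divisor.
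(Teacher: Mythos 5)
Your proposal is correct and follows essentially the same route as the paper: both reduce the claim to the identity $w(fA)=f^{n}w(A)$ applied to the section $A\otimes s$ of $\mathcal{V}\otimes L$ and then compare divisor classes. Your version simply spells out the details (projective flatness of $\mathcal{V}\otimes L$, the role of $\det\Phi_n(s_i)=s_i^{\,n}$) that the paper leaves implicit.
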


\begin{proof}
Let $A$ be a generic section of $\mathcal{V}$ and $f$ a generic section of $L$.  
Then $fA$ is a section of $\mathcal{V} \otimes L$, and the Wronskian satisfies
\[
w(fA) = f^n w(A).
\]
This yields the stated formula.
\end{proof}

\begin{corollary}
For a line bundle $L$ on a Riemann surface $X$ and $\mathcal{V} = \bigoplus^n L$, one has
\[
w(\mathcal{V}) = nL + \frac{n(n-1)}{2} K.
\]
\end{corollary}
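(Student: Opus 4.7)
The plan is to reduce the corollary directly to the two lemmas just proved. First I would observe that $\bigoplus^n L$ is canonically isomorphic to $\mathcal{O}_X^{\oplus n} \otimes L$, that is, the rank-$n$ trivial bundle tensored with $L$. Since $L$ is a line bundle and the trivial bundle has identity transition matrices, the resulting bundle has transition functions of the form $\lambda_{ij} \cdot I_n$ with $\lambda_{ij}$ the transition cocycle of $L$, so in particular $\mathcal{V} = \bigoplus^n L$ is projectively flat and the machinery of the previous sections applies.

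Next, I would invoke the tensor-twist lemma with $\mathcal{V}$ replaced by the trivial rank-$n$ bundle $\mathcal{O}_X^{\oplus n}$, giving
\[
w\!\left(\mathcal{O}_X^{\oplus n} \otimes L\right) \;=\; w\!\left(\mathcal{O}_X^{\oplus n}\right) \otimes L^{n}.
\]
Then I would apply the preceding lemma on the trivial bundle to identify $w(\mathcal{O}_X^{\oplus n})$ with $\tfrac{n(n-1)}{2} K$. Combining these two identities and switching to additive notation in $\mathrm{Pic}(X)$ yields
\[
w(\mathcal{V}) \;=\; nL + \tfrac{n(n-1)}{2} K,
\]
as claimed.

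There is essentially no obstacle here, as both inputs are already at hand; the only thing worth double-checking is the identification $\bigoplus^n L \simeq \mathcal{O}_X^{\oplus n} \otimes L$ and the verification that this bundle is projectively flat, which is immediate from the shape of its transition cocycle. One could alternatively give a direct verification by taking a generic meromorphic section of $L$, say $s$, and forming $A = (s, sf, sf^2, \dots, sf^{n-1})$ for a nonconstant meromorphic function $f$; the identity $w(fA) = f^n w(A)$ from the proof of the twist lemma, together with the monomial Wronskian formula, recovers the same conclusion and provides a concrete sanity check.
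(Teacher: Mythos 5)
Your proposal is correct and is exactly the argument the paper intends: the corollary is stated without proof immediately after the two lemmas, and the intended derivation is precisely the identification $\bigoplus^n L \simeq \mathcal{O}_X^{\oplus n}\otimes L$ followed by the twist lemma and the trivial-bundle computation. Your extra remarks on projective flatness and the concrete section $A=(s,sf,\dots,sf^{n-1})$ are sound sanity checks but add nothing beyond the paper's route.
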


\medskip

\noindent\textbf{Question.} Is it true in general that for any projectively flat vector bundle $\mathcal{V}$ of rank $n$,
\[
w(\mathcal{V}) \stackrel{?}{= }\det(\mathcal{V}) + \frac{n(n-1)}{2} K \, ?
\]
Either answer yes or no is interesting to us. A Yes would provide a direct and elegant relation between the Wronskian line bundle, the determinant bundle, and the canonical line bundle of the Riemann surface, and No shows that the Wronskian line bundle cannot be expressed in terms of such familiar line bundles. 

\bigskip

\end{document}